\documentclass[12pt]{amsart}
\usepackage{graphics}
\usepackage{setspace}
\usepackage{amsfonts,amssymb,color}
\usepackage[mathscr]{eucal}
\usepackage{amsmath, amsthm}
\usepackage{mathrsfs}
\usepackage{amsbsy}
\usepackage{dsfont}
\usepackage{bbm}
\usepackage{wasysym}
\usepackage{stmaryrd}
\usepackage{url}

\input xypic
\xyoption{all}

\makeindex
\makeglossary

\begin{document}
\baselineskip = 16pt
\onehalfspacing
\newcommand \ZZ {{\mathbb Z}}
\newcommand \NN {{\mathbb N}}
\newcommand \RR {{\mathbb R}}
\newcommand \PR {{\mathbb P}}
\newcommand \AF {{\mathbb A}}
\newcommand \GG {{\mathbb G}}
\newcommand \QQ {{\mathbb Q}}
\newcommand \CC {{\mathbb C}}
\newcommand \bcA {{\mathscr A}}
\newcommand \bcC {{\mathscr C}}
\newcommand \bcD {{\mathscr D}}
\newcommand \bcF {{\mathscr F}}
\newcommand \bcG {{\mathscr G}}
\newcommand \bcH {{\mathscr H}}
\newcommand \bcM {{\mathscr M}}
\newcommand \bcI {{\mathscr I}}
\newcommand \bcJ {{\mathscr J}}
\newcommand \bcK {{\mathscr K}}
\newcommand \bcL {{\mathscr L}}
\newcommand \bcO {{\mathscr O}}
\newcommand \bcP {{\mathscr P}}
\newcommand \bcQ {{\mathscr Q}}
\newcommand \bcR {{\mathscr R}}
\newcommand \bcS {{\mathscr S}}
\newcommand \bcV {{\mathscr V}}
\newcommand \bcU {{\mathscr U}}
\newcommand \bcW {{\mathscr W}}
\newcommand \bcX {{\mathscr X}}
\newcommand \bcY {{\mathscr Y}}
\newcommand \bcZ {{\mathscr Z}}
\newcommand \goa {{\mathfrak a}}
\newcommand \gob {{\mathfrak b}}
\newcommand \goc {{\mathfrak c}}
\newcommand \gom {{\mathfrak m}}
\newcommand \gon {{\mathfrak n}}
\newcommand \gop {{\mathfrak p}}
\newcommand \goq {{\mathfrak q}}
\newcommand \goQ {{\mathfrak Q}}
\newcommand \goP {{\mathfrak P}}
\newcommand \goM {{\mathfrak M}}
\newcommand \goN {{\mathfrak N}}
\newcommand \uno {{\mathbbm 1}}
\newcommand \Le {{\mathbbm L}}
\newcommand \Spec {{\rm {Spec}}}
\newcommand \Gr {{\rm {Gr}}}
\newcommand \Pic {{\rm {Pic}}}
\newcommand \Jac {{{J}}}
\newcommand \Alb {{\rm {Alb}}}
\newcommand \Corr {{Corr}}
\newcommand \Chow {{\mathscr C}}
\newcommand \Sym {{\rm {Sym}}}
\newcommand \Prym {{\rm {Prym}}}
\newcommand \cha {{\rm {char}}}
\newcommand \eff {{\rm {eff}}}
\newcommand \tr {{\rm {tr}}}
\newcommand \Tr {{\rm {Tr}}}
\newcommand \pr {{\rm {pr}}}
\newcommand \ev {{\it {ev}}}
\newcommand \cl {{\rm {cl}}}
\newcommand \interior {{\rm {Int}}}
\newcommand \sep {{\rm {sep}}}
\newcommand \td {{\rm {tdeg}}}
\newcommand \alg {{\rm {alg}}}
\newcommand \im {{\rm im}}
\newcommand \gr {{\rm {gr}}}
\newcommand \op {{\rm op}}
\newcommand \Hom {{\rm Hom}}
\newcommand \Hilb {{\rm Hilb}}
\newcommand \Sch {{\mathscr S\! }{\it ch}}
\newcommand \cHilb {{\mathscr H\! }{\it ilb}}
\newcommand \cHom {{\mathscr H\! }{\it om}}
\newcommand \colim {{{\rm colim}\, }} 
\newcommand \End {{\rm {End}}}
\newcommand \coker {{\rm {coker}}}
\newcommand \id {{\rm {id}}}
\newcommand \van {{\rm {van}}}
\newcommand \spc {{\rm {sp}}}
\newcommand \Ob {{\rm Ob}}
\newcommand \Aut {{\rm Aut}}
\newcommand \cor {{\rm {cor}}}
\newcommand \Cor {{\it {Corr}}}
\newcommand \res {{\rm {res}}}
\newcommand \red {{\rm{red}}}
\newcommand \Gal {{\rm {Gal}}}
\newcommand \PGL {{\rm {PGL}}}
\newcommand \Bl {{\rm {Bl}}}
\newcommand \Sing {{\rm {Sing}}}
\newcommand \spn {{\rm {span}}}
\newcommand \Nm {{\rm {Nm}}}
\newcommand \inv {{\rm {inv}}}
\newcommand \codim {{\rm {codim}}}
\newcommand \Div{{\rm{Div}}}
\newcommand \CH{{\rm{CH}}}
\newcommand \sg {{\Sigma }}
\newcommand \DM {{\sf DM}}
\newcommand \Gm {{{\mathbb G}_{\rm m}}}
\newcommand \tame {\rm {tame }}
\newcommand \znak {{\natural }}
\newcommand \lra {\longrightarrow}
\newcommand \hra {\hookrightarrow}
\newcommand \rra {\rightrightarrows}
\newcommand \ord {{\rm {ord}}}
\newcommand \Rat {{\mathscr Rat}}
\newcommand \rd {{\rm {red}}}
\newcommand \bSpec {{\bf {Spec}}}
\newcommand \Proj {{\rm {Proj}}}
\newcommand \pdiv {{\rm {div}}}
\newcommand \NS{{\rm{NS}}}
\newcommand \wt {\widetilde }
\newcommand \ac {\acute }
\newcommand \ch {\check }
\newcommand \ol {\overline }
\newcommand \Th {\Theta}
\newcommand \cAb {{\mathscr A\! }{\it b}}

\newenvironment{pf}{\par\noindent{\em Proof}.}{\hfill\framebox(6,6)
\par\medskip}

\newtheorem{theorem}[subsection]{Theorem}
\newtheorem{conjecture}[subsection]{Conjecture}
\newtheorem{proposition}[subsection]{Proposition}
\newtheorem{lemma}[subsection]{Lemma}
\newtheorem{remark}[subsection]{Remark}
\newtheorem{remarks}[subsection]{Remarks}
\newtheorem{definition}[subsection]{Definition}
\newtheorem{corollary}[subsection]{Corollary}
\newtheorem{example}[subsection]{Example}
\newtheorem{examples}[subsection]{examples}

\title{Representability of codimension three cycles}
\author{Kalyan Banerjee}
\address{SRM University AP}

\email{kalyan.ba@srmap.edu.in}
\begin{abstract}
In this paper, we develop the notion of representability of co-dimension three cycles on a fourfold in terms of zero cycles modulo rational equivalence on surfaces.
\end{abstract}
\maketitle

\section{Introduction}
The representability question in the theory of Chow groups is an important question. Precisely, it means the following: let $X$ be a smooth projective variety over the field of complex numbers and let $\CH^p(X)$ denote the Chow group of co-dimension $p$ algebraic cycles on $X$ modulo rational equivalence. Let $\CH^p(X)_{\alg}$ denote the subgroup of $\CH^p(X)$ consisting of algebraically trivial cycles. We say that the group $\CH^p(X)_{\alg}$ is representable if there exists a smooth projective curve $C$ and a correspondence $\Gamma$ on $C\times X$ such that $\Gamma_*$ from $J(C)\cong \CH^1(C)_{\alg}$ to $\CH^p(X)_{\alg}$ is surjective.  The most interesting and intriguing case is the zero cycles on $X$.

The first breakthrough result in this direction is by Mumford, \cite{M}: the group $\CH^2(S)_{\alg}$ of a smooth projective complex algebraic surface with geometric genus greater than zero is not representable. Roitman \cite{R1} further generalized for higher-dimensional varieties proving that if the variety $X$ has a global holomorphic $i$-form on it, then the group $\CH^n(X)_{\alg}$ is not representable. Here $n$ is the dimension of $X$ and we have $0<i\leq n$. 

Then there is the famous converse question due to Spencer Bloch saying that: for a smooth projective complex algebraic surface $S$ with geometric genus equal to zero, the group $\CH^2(S)_{\alg}$ is representable. This question has been answered for the surfaces not of general type with geometric genus equal to zero by Bloch-Kas-Liebarman in \cite{BKL}. The conjecture is still open in general for surfaces of general type, but it has been solved in many examples in \cite{B}, \cite{IM}, \cite{GP}, \cite{PW}, \cite{V}, \cite{VC}.

In the case of highest codimensional cycles on a smooth projective variety $X$, the notion of representability can also be defined in another way. 

\begin{definition}\label{defn1}
We consider the natural map from the symmetric power $\Sym^d X$ to $\CH^n(X)_{\alg}$, for $d$ positive and $n$ being the dimension of $X$. Suppose that this map is surjective for some $d$ then we say that the group $\CH^n(X)_{\alg}$ is representable.
    
\end{definition} 

It can be proved as in [\cite{Vo}, Chapter 10, Section 1], that this notion of representability is equivalent to the first notion of representability introduced before.

So following the approach of Voisin as in \cite{Vo}, it is natural to ask whether there is a second notion of representability for lower co-dimensional cycles, specifically for co-dimension three cycles on a smooth projective fourfold. To do this, first, we assume that:

(I) There exist a smooth, projective variety $Y$ and a correspondence $\Gamma$ of codimension $p$ on $Y\times X$, such that $\Gamma_*$ is surjective from ${\CH_0(Y)}_{0}\cong {\CH_0(Y)}_{\alg}$ to $\CH^p(X)_0$, (here ${\CH_0(Y)}_0,\CH^p(X)_0$ denote the group of degree zero cycles on $Y,X$, respectively modulo rational equivalence).

Under this assumption, the group $\CH^p(X)_0$ is contained in $\CH^p(X)_{\alg}$. On the other hand, $\CH^p(X)_{\alg}$ is always contained in $\CH^p(X)_0$ (this fact is due to the existence of Chow varieties; for reference, see [\cite{Fulton}, Example 10.3.3]).

Now, the second notion of representability is as follows:
Consider the two-fold product of the Chow variety $C^p_{d}(X)\times C^p_{d}(X)$ where $C^p_d(X)$ is the projective variety that parameterizes the codimension $p$ and degree $d$ cycles on $X$. Note that the degree of a cycle is determined by fixing a embedding of $X$ into a projective space. Consider the natural map from this product to the group $\CH^p(X)_{0}$ and hence (under assumption) to $\CH^p(X)_{\alg}$ and ask the question: Does the surjectivity of a proper countable union (which is not a finite union) of these maps implies the representability in the first sense of $\CH^p(X)_{\alg}$ up to dimension two, i.e:
Let $X$ be a smooth projective variety and let $A^p(X)$ denote the algebraically trivial, codimension $p$ algebraic cycles on $X$, modulo rational equivalence, which was previously denoted by $CH^p(X)_{\alg}$ (for notational convenience). Then we say that $A^p(X)$ is weakly representable up to dimension two if there exist finitely many curves $C_1,\cdots,C_m$ with correspondences $\Gamma_1,\cdots,\Gamma_m$ on $C_1\times X, \cdots, C_m\times X$ and finitely many surfaces $S_1,\cdots,S_n$ with correspondences $\Gamma_j'$ on $S_j\times X$, such that
$$\sum_i \Gamma_i+\sum_j \Gamma_j'$$
is surjective from $\oplus_i A^1(C_i)\oplus _j A^2(S_j)$ to $A^p(X)$.

First, we prove the following in this direction:

\smallskip

Suppose that:

A) The Chow group of codimension $p$ cycles is generated by linear subspaces, that is, the natural map (Abel-Jacobi) from $\CH_0(F(X))$ to $\CH^p(X)$ is surjective.  Here $F(X)$ is the Fano variety of linear subspaces of codimension $p$. 

B) Suppose that $A^p(X)$ is representable in the sense that the map from a proper countable union of the twofold product of the Chow variety to $A^p(X)$ is surjective (which is not a finite union).

\begin{theorem}

 With the above assumptions, there exist a smooth projective surface $S$ and a correspondence $\Gamma$ to $S\times X$, such that
$\Gamma_*:A^2(S)\to A^p(X)$
is surjective. In fact $S$ is obtained as a hyperplane section of the Fano variety of co-dimension $p$-linear subspaces of $X$.

\end{theorem}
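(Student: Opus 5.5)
The strategy is to reduce the statement to a Roitman-type finite-dimensionality result for zero cycles on the Fano variety $F=F(X)$, and then cut $F$ down to a surface by a Lefschetz-type argument. By assumption (A), the correspondence $P\subset F\times X$ given by the universal family of codimension $p$ linear subspaces induces a surjection $P_*:\CH_0(F)\to \CH^p(X)$. It therefore restricts to a surjection $\CH_0(F)_0\to A^p(X)$, since by assumption (I) we have $A^p(X)=\CH^p(X)_0$.

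\textbf{Step 1: from (B) to a fibrewise statement.} Assumption (B) says that $A^p(X)$ is the image of a countable union of the maps $C^p_d(X)\times C^p_d(X)\to A^p(X)$. Pulling back along $P_*$, I would form, for each $d$, the map
$$\theta_d:\Sym^d F\times \Sym^d F\to A^p(X),\qquad (z_1,z_2)\mapsto P_*(z_1-z_2).$$
I would then argue, as in Roitman's proof \cite{R1} and [\cite{Vo}, Chapter 10], that the fibres of $\theta_d$ are countable unions of Zariski closed subsets. The key input is that rational equivalence of the images is parametrized by countably many algebraic families. I then need the fibre dimension of $\theta_d$ to be at least $2d\dim F-2d\cdot c$ for a constant $c$ independent of $d$. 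Here (B) is used: every class of degree $d$ is represented inside a fixed bounded family coming from a Chow variety of fixed degree. Consequently the generic fibre dimension grows like $2d(\dim F - c)$, and this gives $c\le 2$ once one checks that the image of the fixed-degree Chow family is at most "two-dimensional per point". I expect this step to be the main obstacle. One has to extract, from the surjectivity of a countable but not finite union, a uniform bound on the "dimension" of $A^p(X)$ independent of $d$. I would attempt it with a Baire-category argument: some single component of the countable union must dominate, up to countable unions, the images of $\Sym^d F$ for all large $d$.

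\textbf{Step 2: Roitman's trick.} From the fibre dimension estimate I would deduce that a general point of $\Sym^d F$ is rationally equivalent, modulo $\ker P_*$, to a cycle supported on a fixed subvariety of dimension $2$. Concretely, take a general complete intersection surface $S\subset F$ of ample hyperplane sections. A fibre of $\theta_d$ has codimension at most $2d$ in $\Sym^dF\times\Sym^dF$. It therefore meets $\Sym^d S\times \Sym^d S$ for a suitable choice of $S$, by a dimension count: $\Sym^d S$ has codimension $d(\dim F-2)$ in $\Sym^d F$, and the fibres are cut out by ample conditions.

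It follows that every element of $A^p(X)$ is of the form $P_*(z)$ with $z\in \CH_0(S)_0$. Setting $\Gamma=P\circ \iota_S$, where $\iota_S:S\hookrightarrow F$ is the inclusion, gives $\Gamma_*:\CH_0(S)_0=A^2(S)\to A^p(X)$, the last identification coming from the equality $\CH_0(S)_0=\CH_0(S)_{\alg}$ for zero cycles. This map is surjective.

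Finally, I would check that the intersection argument works for a single $S$ rather than for $S$ depending on $d$. This holds because a very general complete intersection avoids the countably many bad loci, and the chosen $S$ is a (repeated) hyperplane section of $F$, as claimed in the statement.
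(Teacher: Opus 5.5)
There is a genuine gap, and it is in Step 2, which you treat as the easy part. You claim that a fibre of $\theta_d$ of small codimension must meet $\Sym^d S\times\Sym^d S$ ``by a dimension count''. This is false in symmetric products. For $x_0\notin S$, the subvariety $(x_0+\Sym^{d-1}F)\times\Sym^d F$ has codimension only $\dim F$, yet it never meets $\Sym^d S\times \Sym^d S$. Fibres of $\theta_d$ genuinely contain such degenerate pieces: add a fixed point $x_0$ to both entries of any fibre of $\theta_{d-1}$.

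Nor are the fibres ``cut out by ample conditions''. They are countable unions of closed sets defined by rational equivalence. Also, $\Sym^d S$ is not an ample complete intersection in $\Sym^d F$: already for a hypersurface $Y$, the preimage of $\Sym^d Y$ in $F^d$ is cut out by the divisors $\pr_i^*Y$, which are only nef.

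The missing idea is the paper's Lemma \ref{lemma1}, the Roitman--Voisin lemma. An irreducible $Z\subset \Sym^k F$ with $\dim Z\ge k$ that is \emph{not contained in any} $\Sym^{k-i}F+W$ with $\dim W<i$ meets $\Sym^k Y$ for ample $Y$. Its proof passes to a desingularization of a component of the preimage of $Z$ in $F^k$. It then shows $D_1\cap\cdots\cap D_k\neq\emptyset$ for the nef divisors $D_i=(\pr_i\circ\tau)^{-1}(Y)$ via the Hodge index theorem. The non-degeneracy hypothesis is what guarantees that the projections to each $F^I$ have dimension $\ge |I|$.

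You would also have to prove that a maximal-dimensional component of a general fibre is non-degenerate in this sense. The paper does this by comparing with fibres of $\theta_{k-i}$ and reaching the contradiction $(ng-1)id_0+1\le 0$. Finally, you must cut down one ample hypersurface at a time rather than jumping directly to $S$.

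Step 1 is also not in place, as you suspected. Assumption (B) gives only a countable, non-finite union, so there is no ``fixed bounded family coming from a Chow variety of fixed degree''. Nothing in the hypotheses produces your constant $c\le 2$. Note also that $c\le 2$ gives fibre codimension $\le 4d$, not the $2d$ you use in Step 2.

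What the paper extracts from (B) is a bound $K$ on $\dim\im(\theta_{gd})$ \emph{independent of $d$}. It obtains this via the incidence set $R$ of pairs with equal image and a Baire-category argument. This uniform bound is exactly the input that makes the non-degeneracy argument work. It also guarantees $\dim Z\ge ngd-K\ge gd$ for large $d$, as the lemma requires.
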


\smallskip

As an application, we show that $A^3(X)$ is REPRESENTABLE up to dimension two where $X$ is a smooth, projective Fano fourfold such that $A^3(X)$ is generated by differences of rational curves. This is  due to the result of \cite{TZ} over complex numbers.

\smallskip

Our argument in this direction is a careful analysis of the argument present in the approach of Roitman in \cite{R},  Voisin in [\cite{Vo}, Chapter 10, Section 1], where the authors deal with the case of zero cycles. First, we recall various notions of representability in the second sense, denoted as "finite dimensionality" of Chow groups of codimension $p$ cycles and show their equivalence. The key point is to use the Roitman's result on the map from the twofold product of the Chow variety to $A^p(X)$ saying that the fibers of this map are a countable union of Zariski closed subsets in the product of Chow varieties. Then after having these equivalent notions of "finite dimensionality" in hand, we proceed to the main theorem.

\smallskip


{\small Throughout the text, we work in the field of complex numbers.}

\section{Notion of representability of codimension three cycles}
Let $X$ be a smooth projective variety and let $A^i(X)$ denote the algebraically trivial, codimension $i$ algebraic cycles on $X$, modulo rational equivalence. Then we say that $A^i(X)$ is weakly representable up to dimension two if there exist finitely many curves $C_1,\cdots,C_m$ with correspondences $\Gamma_1,\cdots,\Gamma_m$ on $C_1\times X, \cdots, C_m\times X$ and finitely many surfaces $S_1,\cdots,S_n$ with correspondences $\Gamma_j'$ on $S_j\times X$, such that
$$\sum_i \Gamma_i+\sum_j \Gamma_j'$$
is surjective from $\oplus_i A^1(C_i)\oplus _j A^2(S_j)$ to $A^i(X)$. If we assume that $X$ is a fourfold, then the representability of $A^2(X)$ is a birational invariant. This is because if we blow up $X$ to $\wt{X}$, then $A^2(\wt{X})$ is isomorphic to $A^2(X)\oplus A^1(Z)$, where $Z$ is the center of the blow-up. Since $A^1(Z)$ is dominated by $J(\Gamma)$, for some smooth projective curve $\Gamma$, this will imply that if $A^2(X)$ is representable up to dimension two, then so is $A^2(\wt{X})$. Suppose that $X,Y$ are birational, such that $Y$ is obtained by one blow-up of $X$ and then one blow-down, then we have a generically finite map from $\wt{X}$ to $Y$, which gives a surjection at the level of $A^2$. So $A^2(X)$ representable up to dimension two implies the same for $A^2(\wt{X})$, hence the same for $A^2(Y)$. Changing the role of $X,Y$, we get the reverse implication.

Similarly 
\begin{theorem} The representability of $A^3(X)$ up to dimension two, where $X$ is a smooth projective fourfold is a birational invariant in $X$. 
\end{theorem}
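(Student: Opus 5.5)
We mirror the argument just given for $A^2$. By weak factorization, any birational map between smooth projective fourfolds factors as a sequence of blow-ups and blow-downs along smooth centers, so it suffices to treat a single blow-up $\pi:\wt{X}\to X$ along a smooth center $Z$ of codimension $r\in\{2,3,4\}$. We then show that $A^3(X)$ is representable up to dimension two if and only if $A^3(\wt{X})$ is.

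The main tool is the blow-up formula for Chow groups. If $E=\mathbb P(N_{Z/X})\xrightarrow{q} Z$ is the exceptional divisor with inclusion $j:E\hookrightarrow \wt{X}$, then
$$\CH^3(\wt{X})\cong \CH^3(X)\oplus\bigoplus_{k=1}^{r-1}\CH^{3-k}(Z),$$
and the pieces are embedded by $\pi^*$ and $j_*(\xi^{k-1}\cdot q^*(-))$. These maps are induced by correspondences, so they preserve algebraic triviality, and the decomposition restricts to
$$A^3(\wt{X})\cong A^3(X)\oplus\bigoplus_{k} A^{3-k}(Z).$$
We check the possible centers case by case.
\begin{itemize}
\item If $Z$ is a point, the extra groups are $A^{*}$ of a point, which vanish.
\item If $Z$ is a curve ($r=3$), the extra terms are $A^2(Z)=0$ and $A^1(Z)=J(Z)$, which is represented by the curve itself.
\item If $Z$ is a surface ($r=2$), the extra term is $A^2(Z)=\CH_0(Z)_{\alg}$, which is represented by the surface $Z$ through the identity correspondence.
\end{itemize}
In every case the extra summands are images of $A^1(C)$ for curves $C$ or of $A^2(S)$ for surfaces $S$, under correspondences composed with $j_*\xi^{k-1}q^*$. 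Hence if $\sum\Gamma_i+\sum\Gamma'_j$ surjects onto $A^3(X)$, then composing with $\pi^*$ and adding these correspondences gives a surjection onto $A^3(\wt{X})$. Conversely, $\pi_*\pi^*=\id$, so $\pi_*$ is surjective from $A^3(\wt{X})$ to $A^3(X)$. Composing the representing correspondences for $\wt X$ with $\Gamma_\pi$ therefore represents $A^3(X)$.

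To finish, take a blow-up/blow-down chain $X=X_0\leftarrow \wt X_1\to X_1\leftarrow\cdots\to Y$. Both directions of the single blow-up step preserve the property, so it transfers along the chain from $X$ to $Y$. Alternatively, as in the $A^2$ argument, one can use a common resolution $\wt{X}\to Y$, which is generically finite. Then $\phi_*\phi^*$ is multiplication by the degree, but we want surjectivity with integer coefficients. For this reason the weak-factorization route, in which each step is an honest blow-up and $\pi_*$ is surjective, is cleaner.

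The main obstacle is verifying that the blow-up decomposition respects the algebraically trivial subgroups. This includes the surjection of $A^3(\wt X)$ onto the sum of the pieces, not only the inclusion of each piece. We handle it by noting that each projection is given by an explicit correspondence ($\pi_*$ and $q_*(\xi^{m}\cdot j^*(-))$), and correspondences preserve algebraic equivalence.
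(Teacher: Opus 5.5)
Your proposal is correct and takes the same route as the paper: the blow-up formula $A^3(\wt X)\cong A^3(X)\oplus A^2(S)$ or $A^3(X)\oplus A^1(C)$, iterated along a factorization into blow-ups and blow-downs, with details the paper omits (point centers, algebraic triviality of the decomposition via correspondences, and the blow-down direction via $\pi_*\pi^*=\id$). One small correction to your side remark: in the common-resolution route the map $\wt X\to Y$ is birational, so $\phi_*\phi^*=\id$ integrally, and that route, using resolution of indeterminacy instead of weak factorization, works just as well.
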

\begin{proof}
This is because if we blow up $X$ along a surface or a curve then the blow-up formula gives us
$$A^3(\wt{X})=A^3(X)\oplus A^2(S)$$
or
$$A^3(\wt{X})=A^3(X)\oplus A^1(C)$$
where $S$ or $C$ is the center of the blow-up. So, if we blow up for many times, we are only adding $A^2$ of a surface or $A^1$ of a curve, so the representability up to dimension two remains.
\end{proof}

But this invariant of the representability of dimension two is not sufficient to find the obstruction to rationality of unirational fourfolds. This is because for unirational fourfolds, there always exists a dominant rational map from $\PR^4\dashrightarrow X$. By blowing up the indeterminacy of the rational map and observing that $A^3(\PR^4)$ is trivial, we see that $A^3(X)$ is representable up to dimension two by the previous argument. Therefore we need to go further to find a birational invariant which detects the non-rationality for a unirational smooth, projective fourfold. Suppose that there exists a birational map
$$\PR^4\dashrightarrow X$$
Then by the blow-up formula, we have
$$A^3(\widetilde{\PR^4})=\oplus_i A^2(S_i)\oplus \oplus_j A^1(C_j)$$
where $\widetilde{\PR^4}$ is the blow up along the indeterminacy locus consisting of $S_i$'s and $C_j$'s of the above mentioned birational map. It is well known that in characteristic zero any birational map is a sequence of blow-up and blow-downs. Let us assume that we get $X$ from $\PR^4$ by one blow up and one blow down. Then we have
$$A^3(\widetilde{\PR^4})=A^2(S)$$
or
$$=A^1(C)$$
depending on whether the center of the blow-up is a surface $S$ or a curve $C$. On the other hand $\widetilde{\PR^4}$ is also a blow-up of $X$ along a surface $S'$ or a curve $C'$. Therefore
$$A^3(\widetilde{\PR^4})=A^2(S')\oplus A^3(X)$$
or
$$=A^1(C')\oplus A^3(X)$$
and hence
$$A^2(S)=A^2(S')\oplus A^3(X)$$
or
$$A^2(S)=A^1(C')\oplus A^3(X)$$
or
$$A^1(C)=A^2(S')\oplus A^3(X)$$
or
$$A^1(C)=A^1(C')\oplus A^3(X)$$
So we see that $A^3(X)$ is not only representable up to dimension $2$ but the kernel of the map from $A^3(\widetilde{\PR^4})$ to $A^3(X)$ has kernel given by
$$A^2(S')$$
or $$A^1(C')\;.$$ So the kernel is representable up to dimension $2$ and in fact it is isomorphic to
$$A^2(S')$$
or $$A^1(C')\;.$$
Therefore considering the general case of many blow-ups and blow downs, we have that the group $A^3(X)$ is representable up to dimension two, that is there exists smooth projective curves and surface $C_i$'s and $S_j$'s and correspondences $\Gamma_i$'s and $\Gamma_j$'s on $C_i\times X$, $S_j\times X$, such that
$$\oplus_i \Gamma_i\oplus_j \Gamma_j: \oplus_i A^1(C_i)\oplus \oplus_j A^2(S_j)\oplus A^1(S_j)\to A^3(X)$$
is onto and  the kernel of this homomorphism is isomorphic to a direct sum
$$\oplus_l A^1(C_l)\oplus_m A^2(S_m)\oplus A^1(S_m)\;,$$
where $C_l, S_m$ are smooth projective curves and surfaces.
From now on, we fix this as the definition of representability up to dimension two. This is a birational invariant by the discussion above and supposing that $A^3(X)$ is not representable up to dimension two would lead to the non-rationality of $X$.

\subsection{Finite dimensionality up to dimension two in terms of Chow varieties}
Let $X$ be a smooth projective variety defined over the ground field $k$. Let $C^p_d(X)$ denote the Chow variety of $X$ parameterizing all codimension $p$ cycles on $X$ to a certain degree $d$. To consider the degree, we fix an embedding of $X$ into some projective space $\PR^N$. Consider the $k$-points of the variety $C^p_{d}(X)$. Then consider the map
$$\theta_p^d:C^p_d(X)\times C^p_d(X)\to \CH^p(X)$$
given by
$$(Z_1,Z_2)\mapsto [Z_1-Z_2]$$
where $[Z_1-Z_2]$ is the class of the cycle $Z_1-Z_2$ in the Chow group. Suppose that the algebraic and homological equivalence coincide on $X$ for codimension $p$ cycles. Then the map $\theta^p_d$ takes the values in $A^p(X)$. By using incorrect notation, we will denote the class $[Z_1-Z_2]$ as $Z_1-Z_2$. Now that
$$A^p(X)=\bigcup_d \im(\theta^p_d)$$
\begin{definition}
We say that $A^p(X)$ is finite dimensional up to dimension two if there exists a positive integer $n>1$ such that
$$A^p(X)=\bigcup_d \im(\theta^p_{nd})\;.$$
\end{definition}

\begin{example}
Consider a smooth projective surface $S$ of a geometric genus greater than zero. Then by Mumford's theorem it follows that $A^2(S)$ is not weakly representable, that is, there does not exist a smooth projective curve $C$ and a correspondence $\Gamma$ on $C\times S$ such that
$$\Gamma_*: J(C)\to A^2(S)$$
is onto. This has the consequence that none of the map $\theta^2_d$ from $\Sym^d S\times \Sym^d S$ is on. Now consider embedding $S$ in $\PR^n$, such that the genus of the hyperplane sections of $S$ is greater than $1$ (this can be achieved by appealing to the adjunction formula). Then by Bertini's theorem it follows that given any two closed points $P,Q$ on $S$, there exists a smooth hyperplane section through $P,Q$. This leads to the fact that
$$\oplus_t J(C_t)$$
surjects onto $A^2(S)$, here $C_t$ is a smooth hyperplane section of $S$. Now it is well known that
$$\Sym^g C_t\times \Sym^g C_t$$
surjects onto $J(C_t)$. So combining the above two facts we have that
$$\bigcup_d \theta^2_{gd}=A^2(S)$$
since $g>1$
$$\bigcup_d \Sym^{gd} S\times \Sym^{gd} S\subsetneq \bigcup_d \Sym^d S\times \Sym^d S\;.$$
Therefore it follows that $A^2(S)$ is finite dimensional up to dimension $2$.
\end{example}

\begin{example}
Let $X$ be a smooth cubic fourfold embedded in $\PR^5$ and let $F(X)$ denote the Fano variety of lines on $X$. It is well known that $A^3(X)$ is dominated by $A^4(F(X))$, that is, by zero cycles on $F(X)$. It is also known that $A^3(X)$, therefore $A^4(F(X))$ is not weakly representable. Therefore, there do not exist a curve $C$ and a correspondence $\Gamma$ on $C\times F(X)$ such that
$$\Gamma_*: J(C)\to A^4(F(X))$$
is onto. Therefore, there does not exist a single $\theta^4_d$, which is onto. By the argument as in the previous example, we have a very ample line bundle on $F(X)$,of high degree, therefore, the curves in this linear system are of high genus. Therefore we have
$$\bigcup_d \theta^4_{gd}=A^4(F(X))=A^3(X)\;.$$
Therefore the group $A^4(F(X))$ and therefore $A^3(X)$ are finite-dimensional up to dimension two.
\end{example}
Suppose that $A^p(X)$ is finite-dimensional up to dimension two. Then there exists $g>1$, such that
$$\bigcup_d \theta^p_{gd}$$
surjects onto $A^p(X)$. Then we define the dimension of $A^p(X)$ as
$$\min_g\max_d \dim(\im(\theta^p_{gd}))\;.$$
Here $$\dim(\im(\theta^p_{gd}))=2\dim(C^p_{gd}(X))-\dim(\theta^p_{gd})^{-1}(z)\;.$$
Suppose that this number
$$\min_g\max_d \dim(\im(\theta^p_{gd}))$$
is finite then we say that $A^p(X)$ has finite dimension. Now we prove this.
\begin{theorem}
Suppose that the linear subspaces of codimension $p$ on $X$ generate $A^p(X)$. Then $A^p(X)$ is finite dimensional up to dimension two if and only if it has finite dimension.
\end{theorem}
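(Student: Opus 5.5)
We prove the two implications separately, modelling the argument on Roitman's treatment of zero cycles as presented in [\cite{Vo}, Chapter 10, Section 1]. Throughout we use Roitman's result that each fibre $(\theta^p_{d})^{-1}(z)$ is a countable union of Zariski closed subsets of $C^p_d(X)\times C^p_d(X)$. Hence the fibre dimension, and so $\dim(\im(\theta^p_{d}))$, is well defined and equals its generic value on a countable union of locally closed strata.

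\emph{Finite dimension $\Rightarrow$ finite dimensional up to dimension two.} By definition, $\min_g\max_d \dim(\im(\theta^p_{gd}))$ is finite only when it is attained by some $g$ for which the maximum is taken over a family $\bigcup_d \im(\theta^p_{gd})$. For this we need that family to exhaust $A^p(X)$. We would argue that for that $g$ the images $\im(\theta^p_{gd})$ increase with $d$: add a fixed effective cycle $W$ of degree $g$ to both $Z_1$ and $Z_2$. Their dimensions are bounded by some $N$, so the union stabilises to an image of dimension at most $N$. Using the hypothesis that codimension $p$ linear subspaces generate $A^p(X)$, every class is a difference of sums of linear subspaces. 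Padding both sides by a common linear subspace brings the degree into $g\NN$, so the class lies in some $\im(\theta^p_{gd})$. This gives $A^p(X)=\bigcup_d\im(\theta^p_{gd})$ with $g>1$.

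\emph{Finite dimensional up to dimension two $\Rightarrow$ finite dimension.} Fix $g>1$ with $A^p(X)=\bigcup_d\im(\theta^p_{gd})$. We must show $\dim \im(\theta^p_{gd})$ is bounded in $d$. The plan follows Roitman's stabilisation argument:
\begin{enumerate}
\item Via the generation hypothesis, replace $C^p_{gd}(X)$ by $\Sym^{gd}F(X)$, the symmetric product of the Fano variety of codimension $p$ linear subspaces. This is legitimate because the natural map $\Sym^{gd}F(X)\to C^p_{gd}(X)$ has image hitting every class.
\item Now $\theta$ factors through zero cycles on $F(X)$, so the problem becomes one about $\CH_0(F(X))$.
\item Consider the addition maps $\im(\theta^p_{gd})\times\im(\theta^p_{gd'})\to\im(\theta^p_{g(d+d')})$. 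Since all images are contained in one another and their union is the whole group, a Baire-category (countability) argument over $\CC$ shows that some $\im(\theta^p_{gd_0})$ already equals $A^p(X)$ up to translation on a dense open part.
\item It follows that $\dim\im(\theta^p_{gd})\le \dim\im(\theta^p_{gd_0})$ for all $d$: otherwise a higher-dimensional family would inject modulo fibres into a lower-dimensional countable union, which is a contradiction.
\end{enumerate}

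The main obstacle is step 3, the countability argument. Roitman's argument produces a $d_0$ with $\theta_{d_0}$ surjective, and here we want only boundedness of dimension without surjectivity of a single map. The examples above show that no single $\theta$ need be onto. We would therefore run the argument on dimensions rather than surjectivity. The fibre dimension of $\theta^p_{g(d+1)}$ is at least the fibre dimension of $\theta^p_{gd}$ plus $2\dim C^p_{g}$, because the fibre contains the fibre of $\theta^p_{gd}$ translated by the diagonal $\{(W,W)\}$. This forces $\dim\im$ to be non-increasing increments eventually, hence bounded once a countable-union argument excludes unbounded growth.
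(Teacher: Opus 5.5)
\emph{Verdict: the direction ``finite dimensional up to dimension two $\Rightarrow$ finite dimension'' has a genuine gap, and it cannot be closed as the statement stands.}

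\textbf{Where the gap is.} Your implication ``finite dimension $\Rightarrow$ finite dimensional up to dimension two'' is fine. It is much shorter than the paper's route, which reruns Roitman's stabilisation (padding by $i_x$, an incidence component $R_0$, and a dimension count giving $\im(\theta^p_{(g+m)d})=\im(\theta^p_{gd})$). Padding alone gives $A^p(X)=\bigcup_d\im(\theta^p_{gd})$ for \emph{every} $g$. You should add why the two sums of linear subspaces have the same number of terms: an algebraically trivial class has degree $0$, and each linear subspace has degree $1$.

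The gap is in the other implication. Step 3 has no precise content. $A^p(X)$ is not a variety, so ``equals $A^p(X)$ up to translation on a dense open part'' means nothing. A Baire-category argument has to be run on an incidence set inside products of $\Sym^kF(X)\times \Sym^kF(X)$, as the paper does with $R$. Step 4 rests on step 3.

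Your fallback count is also wrong. The translates $(Z_1'+W,Z_2'+W)$ of a fibre of $\theta^p_{gd}$ form a family of dimension $\dim(\theta^p_{gd})^{-1}(z)+\dim C^p_g(X)$, not $+2\dim C^p_g(X)$, because the diagonal is a copy of $C^p_g(X)$. Moreover this family lies over the special points $(Z_1+W,Z_2+W)$, not over a general point. Done at a general point, the fibre of $\theta^p_{g(d+1)}$ contains a sum of a fibre of $\theta^p_{gd}$ and a fibre of $\theta^p_g$. That yields only $\dim\im(\theta^p_{g(d+1)})\le\dim\im(\theta^p_{gd})+\dim\im(\theta^p_g)$, i.e.\ at most linear growth. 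Your phrase ``once a countable-union argument excludes unbounded growth'' is exactly the assertion to be proved.

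\textbf{Why it cannot be closed.} Your padding argument shows that, under the generation hypothesis, finite dimensionality up to dimension two always holds. So the theorem would say that $\dim\im(\theta^p_{gd})$ is always bounded in $d$. Bounded dimension lets you run the stabilisation argument of the paper's converse: when $\dim\im$ stops growing, so does $\im$. That produces a single $\theta^p_{gd_0}$ onto $A^p(X)$.

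For a cubic fourfold, the paper's own example asserts three things:
\begin{itemize}
\item $A^3(X)$ is generated by lines;
\item $A^3(X)$ is finite dimensional up to dimension two;
\item no single $\theta$ is onto, since $A^3(X)$ is not weakly representable.
\end{itemize}
Together with the stabilisation argument, these show the implication fails there.

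The paper's proof of this direction has the matching hole. Its Baire step gives, for each $d$, some $d'=d'(d)$ with $\dim\im(\theta^p_{g'd})\le 2ngd'$. Nothing bounds $d'$ independently of $d$, so $\max_{d'}2ngd'$ is never shown to be finite.
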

\begin{proof}
If $A^p(X)$ is finite dimensional up to dimension two then there exists $g>1$ such that
$$\bigcup_d \Sym^{gd}F(X)\times \Sym^{gd}F(X)$$
surjects onto $A^p(X)$, $F(X)$ is the variety of linear subspaces of codimension $p$ on $X$. Let us consider
$$R\subset (\bigcup_d \Sym^{gd}F(X)\times \Sym^{gd} F(X))\times (\bigcup_d \Sym^{g'd}F(X)\times \Sym^{g'd}F(X))$$
consisting of 4-tuples
$$(Z_1,Z_2,Z_1'.Z_2')$$
such that
$$\theta^p_{gd}(Z_1,Z_2)=\theta^p_{g'd}(Z_1',Z_2')$$
as we have
$$\bigcup_d \theta^p_{gd}=A^p(X)$$
we have $R$ surjecting onto the second co-ordinate
$$\bigcup_d \Sym^{g'd}F(X)\times \Sym^{g'd}F(X)\;.$$
Also it follows by the Mumford-Roitman argument that $R$ is a countable union of Zariski closed subsets in the product of the infinite union. Thus by the Baire category argument (that is over an uncountable ground field any irreducible variety cannnot be obtained as a countbale union of its proper subvarieties), there exists $R_d$ such that it surjects onto
$$\Sym^{g'd}F(X)\times \Sym^{g'd}F(X)$$
in particular $\dim(R_d)\geq 2ng'd$
where $n$ is the dimension of $F(X)$. Now if we consider the map from
$$R_d\to \Sym^{gd'}F(X)\times \Sym^{gd'}F(X)$$
then the dimension of the fiber of this map has dimension greater or equal than
$$\dim(R)-2ngd\geq 2ng'd-2ngd'$$
Note that this algebraic set
$$R_0\cap (Z_1,Z_2)\times \Sym^{g'd}F(X)\times \Sym^{g'd}F(X)$$
is contained in
$$(\theta^{p}_{g'd})^{-1}(\theta^p_{gd'}(Z_1,Z_2))$$
Therefore the dimension of the fibers of $\theta^p_{g'd}$ is greater or equal than $$2ng'd-2ngd'$$
therefore
$$\dim(\im(\theta^p_{g'd}))\leq 2ngd'$$
hence the minima
$$\max_d\dim(\im(\theta^p_{g'd}))\leq \max_{d'}2ngd'$$
whence
$$\min_{g'}\max_d \dim(\im(\theta^p_{g'd}))\leq \max_{d'}2ngd' $$
So we conclude that $A^p(X)$ is of finite dimension.
\medskip
Conversely suppose that $A^p(X)$ is of finite dimension. Then for large $d$ the number
$$\min_g\max_d \im(\theta^p_{gd})$$
is constant. We have to show that there exists $g$ greater than one such that
$$\bigcup_d \im(\theta^p_{gd})$$
surjects onto $A^p(X)$. Now consider a positive integer $m$, such that
$$\dim(\theta^p_{(g+m)d})=\dim(\theta^p_{gd})$$
for large $g$, also there exists $m$ such that
$$(g+m)d>gd\;.$$
Let $x$ be a closed point in $F(X)$ and consider the embedding of
$$\Sym^{gd}F(X)\times \Sym^{gd}F(X)$$
into
$$\Sym^{(g+m)d}F(X)\times \Sym^{(g+m)d}F(X)$$
given by
$$(Z_1,Z_2)\mapsto (Z_1+lx,Z_2+lx)$$
here $l=(g+m)d-gd$ and denote the map by $i_x$. Then we have
$$\theta^p_{(g+m)d}\circ i_x=\theta^p_{gd}$$
Let $F$ be the general fiber of $\theta^p_{gd}$. Let $F'$ be the general fiber of $\theta^p_{(g+m)d}$. Now we have
$$\dim(\im(\theta^p_{gd}))=2ngd-\dim(F)$$
and
$$\dim(\im(\theta^p_{(g+m)d}))=2n(g+m)d-\dim(F')$$
Since by our assumption
$$\dim(\im(\theta^p_{gd}))=\dim(\im(\theta^p_{(g+m)d}))$$
we have
$$2ngd-\dim(F)=2n(g+m)d-\dim(F')$$
so
$$\dim(F')=\dim(F)+2n(g+m)d-2ngd\;.$$
Now consider $F''$ to be the fiber of $\theta^{p}_{(g+m)d}$ through $(Z_1+lx,Z_2+lx)$, for a general point $(Z_1,Z_2)$. By the semicontinuity theorem for dimension we have
$$\dim(F'')\geq \dim(F')\;.$$
Now consider the set $R$ as before inside
$$\Sym^{(g+m)d}F(X)\times \Sym^{(g+m)d}F(X)\times \Sym^{gd}F(X)\times \Sym^{gd}F(X)$$
consisting of $$(Z_1,Z_2,Z_1',Z_2')$$
such that
$$\theta^p_{(g+m)d}(Z_1,Z_2)=\theta^p_{gd}(Z_1',Z_2')$$
Given $(Z_1,Z_2)$ in $\Sym^{gd}F(X)\times \Sym^{gd}F(X)$, there exists $(Z_1+lx,Z_2+lx,Z_1,Z_2)$ in $R$. So the map from $R$ to $\Sym^{gd}F(X)\times \Sym^{gd}F(X)$ is surjective. Since $R$ is a countable union of Zariski closed subsets in the product, by the uncountability of the ground field there exists an irreducible component $R_0$ of $R$ mapping dominantly onto $\Sym^{gd}F(X)\times \Sym^{gd}F(X)$ also it passes through $(Z_1+lx,Z_2+lx)$ for a general $(Z_1,Z_2)$. The fiber of  the projection from $R_0$ to $\Sym^{gd}F(X)\times \Sym^{gd}F(X)$ is of dimension equal to $\dim(F'')$. So the fiber dimension is greater or equal than
$$\dim(F)+2n(g+m)d-2ngd$$
Now consider the projection $$p:R_0\to \Sym^{(g+m)d}F(X)\times \Sym^{(g+m)d}F(X)$$
The dimension of
$$\dim(p(R_0))\geq \dim (R_0)-\dim(F)$$
the above is greater or equal than
$$\dim(F'')+2ngd-\dim(F)\geq \dim(F)+2ngd+2n(g+m)d-2ngd=2n(g+m)d\;.$$
So $p$ is surjective and hence $\im(\theta^p_{(g+m)d})=\im\theta^p_{gd}\;.$ From this it follows that
$$\bigcup_{d''}\im(\theta^p_{(g+m)dd''})=\bigcup_d(\im(\theta^p_{gdd''}))$$
Therefore we have that
$$\bigcup_d''\im(\theta^p_{gdd''})=A^p(X)\;.$$
\end{proof}
Now we prove the following main theorem of this section:
\begin{theorem}
Suppose that $X$ is a smooth, projective fourfold on which the algebraic and homological equivalence coincide. Then $A^3(X)$ is representable up to dimension $2$ if and only if it is finite dimensional up to dimension $2$.
\end{theorem}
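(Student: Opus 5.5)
We prove the two implications separately. Throughout, $X$ is a smooth projective fourfold on which algebraic and homological equivalence coincide for codimension three cycles, so $\theta^3_d$ takes values in $A^3(X)$.

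\emph{Representable up to dimension two $\Rightarrow$ finite dimensional up to dimension two.} Assume there are curves $C_i$, surfaces $S_j$ and correspondences $\Gamma_i,\Gamma_j$ such that $\sum_i\Gamma_{i*}+\sum_j\Gamma_{j*}$ maps $\oplus_i A^1(C_i)\oplus_j A^2(S_j)$ onto $A^3(X)$. We may replace the finitely many $C_i$ and $S_j$ by their disjoint union $T$, and then embed each component into a single smooth projective surface, for instance by taking products with curves or by using a curve on a surface. This reduces us to one surface $S$ and one correspondence $\Gamma$ with $\Gamma_*:A^2(S)\to A^3(X)$ surjective.

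Next we run the argument of the first example. Embed $S$ by a very ample line bundle whose smooth hyperplane sections $C_t$ have genus $g>1$. By Bertini, any two points of $S$ lie on a smooth $C_t$, so $A^2(S)=\bigcup_t \im(J(C_t))$. Moreover $\Sym^g C_t\times\Sym^g C_t\to J(C_t)$ is onto, and hence $\bigcup_d\Sym^{gd}S\times\Sym^{gd}S\to A^2(S)$ is onto.

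We then push forward along $\Gamma$. A zero cycle $z$ of degree $gd$ on $S$ gives an effective codimension three cycle $\Gamma_*z$ on $X$, after first making $\Gamma$ effective by adding a multiple of a fixed very ample class, which changes $\Gamma_*$ on degree zero cycles only by zero. Its degree is $gd\cdot e$, where $e$ is a fixed constant. This yields morphisms $\Sym^{gd}S\to C^3_{ged}(X)$ compatible with $\theta$. It follows that $\bigcup_d\im(\theta^3_{ged})=A^3(X)$, with $n=ge>1$.

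\emph{Finite dimensional up to dimension two $\Rightarrow$ representable up to dimension two.} Assume $A^3(X)=\bigcup_d\im(\theta^3_{nd})$ with $n>1$. We first cover the situation by surfaces. Every effective codimension three cycle of degree $nd$ is a sum of $nd$ curves counted with multiplicity, so it is parametrised by the Chow varieties of curves on $X$.

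The key step is a Roitman-type dimension argument, exactly as in the proof of the preceding theorem. The fibres of $\theta^3_{nd}$ are countable unions of Zariski closed subsets. Comparing the images of $\theta^3_{nd}$ and $\theta^3_{n(d+1)}$ via the embeddings $i_x$, and applying the Baire category argument, we obtain that the images stabilise: for $d\ge d_0$ one has $\im\theta^3_{nd}=\im\theta^3_{nd_0}$. Hence $A^3(X)$ is the image of a single variety $W=C^3_{nd_0}(X)\times C^3_{nd_0}(X)$.

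Let $W_1,\dots,W_r$ be the irreducible components of $W$ and let $\mathcal Z\subset W\times X$ be the universal cycle. For each component, take successive general hyperplane sections of a desingularisation of $W_i$ down to a smooth surface $S_i$; here we use a Lefschetz-type statement on surjectivity of $\CH_0$ of a surface section onto $\CH_0$ of the variety modulo the relevant image. Restricting $\mathcal Z$ to $S_i\times X$ gives correspondences $\Gamma_i$ such that $\sum_i\Gamma_{i*}:\oplus A^2(S_i)\to A^3(X)$ is onto. This works because every point of $W_i$ is rationally equivalent, as a zero cycle, to a cycle supported on $S_i$ together with curve contributions; the latter are absorbed by Jacobians of curves $C_l$.

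For the kernel condition in the adopted definition, we would use the same construction as for Definition~I: the kernel is the image of the sum of the analogous surface and curve groups parametrising the fibres of $\theta$, and these fibres are countable unions of closed subsets of bounded dimension.

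\emph{Main obstacle.} The hardest step is the passage from ``$A^3(X)$ is the image of one Chow variety $W$'' to ``$A^3(X)$ is the image of $A^2$ of a surface''. This requires that zero cycles on a high-dimensional $W_i$ be represented on a surface section, which is not automatic. We would first try the $\theta$-stabilisation trick, using the fact that the image has dimension at most $\dim\im\theta^3_{nd_0}$ and that $n>1$ forces the fibres to be large. From these facts we would aim to show that one can cut $W_i$ down to surfaces while preserving surjectivity onto $A^3(X)$.
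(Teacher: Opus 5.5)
Your first direction is essentially fine. It is the paper's first example pushed forward along $\Gamma$, and the paper's proof does not even treat that direction.

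The converse has two gaps. The first is the stabilisation step. The $i_x$-comparison in the proof of the preceding theorem only gives $\im(\theta^3_{nd})\subseteq\im(\theta^3_{n(d+1)})$. Equality requires knowing that $\dim\im(\theta^3_{nd})$ is bounded independently of $d$, and your hypothesis gives no such bound. In the paper, the corresponding bound $K$ is the ``finite dimension'' condition of the preceding theorem. That condition is tied to the standing assumption that lines (codimension-three linear subspaces) generate $A^3(X)$, which is also what lets the paper replace $C^3_d(X)$ by $\Sym^d F(X)$. Moreover, your conclusion proves too much. If $A^3(X)$ were the image of the single variety $W\times W$, the Roitman--Voisin argument (applied to $\Gamma_*$ on zero cycles of a desingularisation of $W$) would make $A^3(X)$ dominated by the Jacobian of one curve. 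The paper's cubic fourfold example rules out exactly this, even though that example satisfies your hypothesis. The paper never collapses to one parameter variety: it keeps the growing symmetric powers $\Sym^{k}F(X)$, $k=gdd_0$, and uses $K$ only to control fibre dimensions. Fibres of $\theta^3$ on $\Sym^kF(X)$ have dimension at least $k\dim F(X)-K$, which is $\geq k$ once $\dim F(X)\geq 2$ and $k\gg 0$.

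The second gap, and the heart of the theorem, is the passage to a surface, which is exactly what you leave open. The ``Lefschetz-type statement'' you invoke does not exist for zero cycles: $\CH_0$ of a variety is in general not carried by a fixed hyperplane section. The missing idea is the paper's Lemma. If $Z\subset\Sym^kF(X)$ is irreducible with $\dim Z\geq k$, and $Z$ is not contained in any set $\Sym^{k-i}F(X)+W$ with $W\subset\Sym^iF(X)$ and $\dim W<i$, then $Z$ meets $\Sym^kY$ for every ample hypersurface $Y\subset F(X)$. The proof goes as follows:
\begin{enumerate}
\item Lift $Z$ to $F(X)^k$ and cut it down to dimension $k$.
\item Pull $Y$ back along the $k$ projections to get nef divisors $D_1,\dots,D_k$ on a desingularisation; their sum is the pullback of an ample class under a generically finite map.
\item Use the Hodge index theorem to get $D_1\cap\cdots\cap D_k\neq\emptyset$.
\end{enumerate}
The paper then shows, by a dimension count using $K$, that a maximal component of a general fibre of $\theta^3$ restricted to $\Sym^kY_t$ is non-degenerate in this sense. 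So every such fibre meets $\Sym^kY$ for a further ample hypersurface $Y$, and restricting $\theta^3$ to $\Sym^k(Y_t\cap Y)$ does not shrink the image. Iterating over ample hypersurfaces cuts $F(X)$ down to a surface $S$ with $A_0(S)\to A^3(X)$ onto.

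This argument needs the product structure of $\Sym^kF(X)$, which the Chow varieties $C^3_d(X)$ lack. Your remark that an effective $1$-cycle of degree $nd$ is a sum of $nd$ curves holds only when the curves are lines. Finally, your sentence on the kernel condition is not an argument, although the paper does not address that condition either.
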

\begin{proof}
Consider the map $\theta^3_d$ from $C^3_{d}(X)\times C^3_d(X)$ to $A^3(X)$ given by
$$(Z_1,Z_2)\mapsto Z_1-Z_2$$
 Since $A^3(X)$ is actually generated by linear subspaces, the above map restricted to $\Sym^d F(X)\times \Sym^d F(X)$ is surjective, continue to call it $\theta^3_d$. Let us consider large $g$ such that $\dim(\cup_d \im(\theta^3_{gd})$ is constant and equal to $K$. Then for a general fiber we have
$$\max_d(\dim(\Sym^{gd} F(X))-\dim((\theta^3_{gd})^{-1}(z))=K$$
So for all $d$ such that
$$\dim(\Sym^{gd}F(X))-\dim((\theta^3_{gd})^{-1}(z))\geq K$$

Now the group $A^4(F(X))$ is generated by zero cycles supported on hyperplane sections of $F(X)$ (to achieve the hyperplane sections we fix an embedding of $F(X)$ into some projective space). Let $Y_t$ be a smooth hyperplane  section of $F(X)$. Then the image of the group $A_0(Y_t)$ under the homomorphism
$$A_0(F(X))\to A^3(X)$$
is also finite dimensional up to dimension $2$ in the sense that it is dominated by
$$\bigcup_d \im(\theta^3_{gd})\;.$$

Now we prove that there exists a $g$ and $d_0$ depending on $g$ such that an irreducible component $Z$ of maximal dimension of a general fiber of $$\theta^3_{gdd_0}|_{\Sym^{gdd_0}Y_t}$$ for all $d$, cannot be contained in a set of the form
$$\Sym^{gdd_0-gid_0}F(X)+ W$$
where $W$ is in $\Sym^{gid_0} F(X)$, $\dim W<id_0$ and the above $+$ means the image of the natural map from
$$\Sym^{gdd_0-gid_0}F(X)\times W\to \Sym^{gdd_0}F(X)\;.$$
If possible, assume that $Z$ is contained in such a set. Note that the dimension of $Z$ is greater than  $\dim(\Sym^{gdd_0}F(X))-K$. So we have
$$\dim(\Sym^{gdd_0-gid_0}F(X))\geq \dim(\Sym^{gdd_0} F(X))-K-id_0+1\;.$$
Let the dimension of $F(X)$ be $n$. Then the above says
$$n(gdd_0-gid_0)\geq ngdd_0-K-id_0+1$$
which implies
$$i<K/ngd_0-d_0\;.$$
Consider the subset
$$Z'=\{(z,w)|z+w\in Z\}\subset \Sym^{gdd_0-gid_0}F(X)\times W\;.$$
By definition this set dominates $Z$, hence is of dimension greater or equal than $ngdd_0-K$. So the general fibers of the second projection $\pr_2:Z'\to W$ are of dimension atleast
$$ngdd_0-K-id_0+1\;.$$
Also note that
$$\theta^p_{gdd_0}(z+w)=\theta^p_{gdd_0-gid_0}(z)+\theta^p_{gid_0}(w)\;.$$
Since $\theta^p_{gdd_0}$ is constant along $Z$, we have $\theta^p_{gdd_0-gid_0}$ is constant along $Z'_w$. Thus if $Z$ passes through a very general point of $\Sym^{gdd_0} F(X)$, then $Z'_w$ passes through a very general point of $\Sym^{gdd_0-gid_0}F(X)$. So we have $\dim(Z'_w)$ is less than the dimension of a general fiber of $\theta^p_{gdd_0-gid_0}$ for generic $w$.  Now the dimension of $Z'_w$ is greater than or equal to
$$ngdd_0-K-id_0+1$$
the dimension of the fiber of $\theta^p_{gdd_0-gid_0}$ is  equal to $(gdd_0-gid_0)n-K$ (because we could choose $(d-i)$ strictly greater than $d-K/ngd_0-d_0$ and arbitrarily large and appeal to finite dimensionality also we need to choose a $g=g(d-i)$ to attain the equality of the dimension of the fiber also we may have to replace $d_0$ by a multiple of $d_0$) and therefore
$$ngdd_0-K-id_0+1\leq ngdd_0-gnid_0-K$$
which implies
$$(ng-1)id_0+1\leq 0$$
which is absurd.
Let us assume that $n\geq 2$, and we have $ngdd_0-K\geq gdd_0$. Consider the following lemma.

\begin{lemma}
\label{lemma1}
Let $Y$ be an ample hypersurface of $F(X)$ and let $Z$ be an irreducible subset of $\Sym^{gd} F(X)$ not contained in any subset of the form
$$\Sym^{gd-gi}F(X)+W$$
with $W\subset \Sym^{gi} F(X)$ and dimension of $W$ is strictly less than $gi$. Then $Z$ intersects $\Sym^{gd} Y$, provided that $\dim(Z)\geq gd$.
\end{lemma}
Therefore by applying the lemma we see that a general fiber of $\theta^p_{gdd_0}|_{\Sym^{gdd_0}Y_t}$ intersects $\Sym^{gd} Y$, for a fixed $g$ and provided that $n\geq 2$. Hence $\theta^p_{gdd_0}|_{\Sym^{gdd_0}Y_t}$ and $\theta^p_{gd}|_{\Sym^{gdd_0} Y_t\cap \Sym^{gdd_0}Y}$ have same image and the later has image of bounded dimension. So we can apply the lemma again and finally get that $\theta^p_{gdd_0}|_{\Sym^{gdd_0}Y_t}$ and $\theta^p_{gdd_0}|_{\Sym^{gdd_0} C_t}$ have same image, where $C_t$ is a smooth projective curve obtained by intersecting $n-1$ many ample hypersurfaces. Therefore there exists a surface $S$ in $F(X)$, such that
$$A_0(S)\to A^3(F(X))$$
is surjective.

\smallskip

\textit{Proof of Lemma \ref{lemma1}}: Consider the quotient map $r:F(X)^d\to \Sym^d F(X)$. Let $r^{-1}(Z)=\wt{Z}$, let $\wt{Z_0}$ be a component of $\wt{Z}$ dominating $Z$. By the hypothesis we have the following:

for every $i\geq 1$ and every subset $I$ of cardinality $i$, we have $\dim p_I(\wt{Z_0})\geq i$, where $p_I$ is the projection from $F(X)^d$ to $F(X)^i$ corresponding to the set of indices. Since $\wt{Z_0}$ dominates $Z$, it is sufficient to prove that $\wt{Z_0}$ intersects $Y^d$, for an ample hyper-surface $Y $  in $F(X)$. Consider a complete intersection $V$ in $\wt{Z_0}$, which is obtained by intersecting $\wt{Z_0}$ with finitely many ample hyper-surfaces. So that dimension of $V$ is $d$. Then the hypotheses on $\wt{Z_0}$ implies that same would be true for  $V$. So without loss of generality we can assume that dimension of $\wt{Z_0}=d$. Consider a de-singularization $Z'$ of $\wt{Z_0}$. Consider the divisors
$$D_i:=(\pr_i\circ \tau)^{-1}(Y)$$
where $\tau$ is the natural map from $Z'$ to $\wt{Z_0}$. Now $Y$ is ample, so we have
$$(\pr_i\circ \tau)_*((\pr_i\circ\tau)^*(Y).C)=Y.(\pr_i\tau)_*C\geq 0$$
which means that $D_i$'s are numerically effective. Our claim will follow from the fact that $D_1\cap\cdots\cap D_d$ is non-empty. So we prove that $D_1\cap\cdots\cap D_d$ is non-empty. First suppose that $d=2$. We have $D_1,D_2$ two divisors numerically effective. Hence we have
$$D_1^2\geq 0;, \quad D_2^2\geq 0$$
Suppose that $D_1.D_2=0$. Then the intersection matrix of $(D_1,D_2)$ is semipositive. So by the Hodge index theorem we have $D_1=rD_2$ for some integer $r$. Hence $(D_1+D_2)^2=0$.  But $D_1+D_2$ is the pull-back of an ample divisor on $F(X)\times F(X)$, under a generically finite map. So it is ample. Therefore $(D_1+D_2)^2>0$, which is a contradiction. The general case follows from this.

\end{proof}

\end{document}